\expandafter\def\csname input@path\endcsname{%
    {sty/},
    {img/},
    {bib/}
}

\RequirePackage{xparse,expl3}
\documentclass[numbers,compress]{vmsta2}

\volume{0}
\issue{0}
\pubyear{2025}
\articletype{research-article}
\usepackage{graphicx,amsfonts,amsmath,enumitem,tikz}
\usepackage{amssymb}
\usepackage{amscd}
\usepackage{comment}
\usepackage{verbatim}
\usetikzlibrary{arrows,automata,shapes,snakes}
\usepackage{amsfonts}

\hyphenation{de-si-de-rium}

\def\todist{\stackrel{d}{\to}}

\def\reel{\hbox{{\rm R}\kern-1em\hbox{{\rm I} }}}
\def\relatif{\ \hbox{{\rm Z}\kern-.4em\hbox{\rm Z}}}
\def\nat{\hbox{{\rm N}\kern-1em\hbox{{\rm I} } }}
\def\comp{\hbox{{\rm C}\kern-.55em\hbox{{\rm I} } }}

\def\smallcomp{\hbox{\fiverm C}\kern-.35em{\hbox{\fiverm I}}}

\def\fudge{\mathchoice{}{}{\mkern.5mu}{\mkern.8mu}}
\def\bbc#1#2{{\rm \mkern#2mu\vbar\mkern-#2mu#1}}
\def\bbb#1{{\rm I\mkern-3.5mu #1}} \def\bba#1#2{{\rm #1\mkern-#2mu\fudge
#1}}
\def\bb#1{{\count4=`#1 \advance\count4by-64 \ifcase\count4\or\bba
A{11.5}\or \bbb B\or\bbc C{5}\or\bbb D\or\bbb E\or\bbb F \or\bbc
G{5}\or\bbb H\or \bbb I\or\bbc J{3}\or\bbb K\or\bbb L \or\bbb
M\or\bbb N\or\bbc O{5} \or \bbb P\or\bbc Q{5}\orrrr \bb R\or\bbc
S{4.2}\or\bba T{10.5}\or\bbc U{5}\or    \bba V{12}\or\bba
W{16.5}\or\bba X{11}\or\bba Y{11.7}\or\bba Z{7.5}\fi}}
\def\rat{\hbox{{\rm Q}\kern-.70em\hbox{{\rm I} } }}

\def \P {\bbb P}

\def\BP{{\Bbb P}}
\def\BP{{\Bbb P}}

\def\BE{{\Bbb E}}

\newcounter{theorem}

\newtheorem{thm}{Theorem}

\newcommand{\eq}{\begin{equation}}
\newcommand{\en}{\end{equation}}
\newcommand{\ignore}[1]{}



\newcommand{\be}{\begin{equation}}
\newcommand{\ber}{\begin{eqnarray}}

\def\bbb#1{{\rm I\mkern-3.5mu #1}} \def\bba#1#2{{\rm #1\mkern-#2mu\fudge
#1}}

\newcommand{\ena}{\end{eqnarray}}

\newcommand{\Ss}{S(n_1,\ldots,n_t)}






\begin{document} 
\begin{frontmatter}

\pretitle{Research Article}

\title{A Chinese restaurant process for  multiset permutations}

\begin{aug}
\author{ \inits{D.}\fnms{Dudley}~\snm{Stark}\ead[label=e1]{d.s.stark@qmul.ac.uk}\orcid{0000-0001-8527-3996} }
\address{\institution{Queen Mary, University of London}, \\
Mile End, London E1 4NS \cny{United Kingdom} }
\runauthor{Dudley Stark}
\end{aug}
\begin{abstract}
Multisets are like sets, except that they can contain multiple copies of their elements. 
If there are $n_i$ copies of $i$, $1\leq i\leq t$, in multiset $M_t$, then there are
$\binom{n_1+\cdots+n_t}{n_1,\ldots, n_t}$ possible permutations of $M_t$.
Knuth showed how to factor any multiset permutation into cycles. 
For fixed $n_i$, $i\geq 1$, we show how to adapt the Chinese restaurant process, which generates random permutations on $n$ elements with weighting
$\theta^{\# \, {\rm cycles}}$, $\theta>0$, sequentially for $n=1,2,\ldots$, to the multiset case, where we fix the $n_i$ and build permutations on $M_t$ sequentially for $t=1,2,\ldots$. 
The number of cycles of a multiset permutation chosen uniformly at random,   
i.e.~$\theta=1$, has distribution given by the sum of independent negative hypergeometric distributed random variables. For all $\theta>0$, and under the assumption that $n_i=O(1)$, we show a central limit theorem as $t\to\infty$ for the number of cycles.
\end{abstract}

\begin{keywords}
\kwd{Chinese restaurant process}
\kwd{random permutations}
\kwd{multiset}
\end{keywords}

\begin{keywords}[MSC2020]%
\kwd{60C05}
\kwd{60J10}
\kwd{60F05}
\end{keywords}

\end{frontmatter}

\section{Introduction} 

A   growth rule which received considerable attention in the literature is  the following preferential attachment   algorithm on permutations written as the product of cycles known as the Chinese restaurant process (CRP) \cite{ABT, BMMU, Feng, GI, GW, GS, IKMW, CSP, Stark}. 
For $n\geq 1$, given a permutation of  $[n-1]:=\left\{1,2,\ldots, n-1\right\}$  which has been constructed at step $n-1$,
element $n$ is  either appended to the permutation as a new singleton cycle with probability $\frac{\theta}{\theta+n-1}$, or  inserted into a random position within 
the existing permutation with probability equal to $\frac{n-1}{\theta+n-1}$.
Let $S_n$ be the set of permutations on $[n]$.
The  permutation obtained at step $n$  has the Ewens distribution, which is the uniform distribution on $n!$ permutations in the case $\theta=1$, and, in general, has probability
distribution 
$\theta^{|\pi|}/\theta_{(n)}$, $\pi\in S_n$,
where $\theta_{(n)}:=\theta(\theta+1)\cdots (\theta+n-1)$ and $|\pi|$ is the number of cycles in $\pi$.
                                                                   
Let $\Pi_{n}$ be the  random permutation of $[n]$  at the $n$th step of the CRP.
The distribution of $\Pi_{n}$ is invariant under relabelling.
For  different orders the permutations  are consistent, in the sense that $\Pi_{m}$, for  $m<n$,  can be derived from $\Pi_{n}$ by removing elements $m+1,\ldots,n$ from their cycles and  deleting empty cycles if necessary.

The CRP expresses every permutation $\pi$ of $[n]$ uniquely as the product
$$
\pi=(x_{1,1}\ldots x_{1,m_1} y_1)
(x_{2,1}\ldots x_{2,m_2} y_2)\cdots
(x_{\tau,1}\ldots x_{\tau, m_\tau} y_\tau),\quad \tau\geq 1,
$$
where the following two conditions are satisfied:
\begin{enumerate}\label{conds}
\item
$y_1< y_2<\ldots < y_\tau$
\item
$y_i< x_{ij} {\rm \ for \ }1\leq j\leq m_i, 1\leq i\leq \tau$.
\end{enumerate}
We will see in Section~\ref{multi} that there is a similar unique expression for 
permutations of multisets.

The number of cycles in $\Pi_{n}$, denoted by
$K_n$,
has probability generating function (p.g.f.)
$$
{\mathbb E} z^{K_n}=\frac{(\theta z)_n}{(\theta)_n},
$$
which corresponds to the distribution of  the sum of $n$ independent Bernoulli variables 
$K_n=\sum_{i=1}^n I_i$, 
$I_i\sim {\rm Bernoulli}(\theta/(i+\theta-1))$; see \cite{ABT}.
We have $I_i=1$ exactly when the element $i$ starts a new cycle.
As $n\to\infty$,
$$
K_n \sim \theta\log n~~{\rm a.s.},~~~\frac{K_n-\theta\log n}{\sqrt{\theta\log n}}\stackrel{d}{\to} {\rm N}(0,1),
$$
where $\todist$ denotes convergence in distribution.

In this paper, a CRP will be defined for multiset permutations  and derived using the methods in \cite{Stark}. As motivation, we note that multiset permutations are of fundamental interest in combinatorics \cite{Stanley}
and random multiset permutations have been studied previously \cite{Feray}.

\section{Multiset permutations and their factorisations into\\ cycles}\label{multi}

There are at least two different ways of defining cycles of multiset permutations
\cite{Knuth, Stanley}.
Knuth's way \cite{Knuth} will be used in this paper.
In this section, we summarise the discussion and results on multiset permutation factorisation taken from
Section~5.1.2 of
\cite{Knuth} that is needed in this paper.
In particular, we are going to use Theorem~\ref{Knuth} below to factor multiset permutations.
A multiset is like a set except that it can have repetitions of
identical elements, for example
$$
M=\{1,1,1,2,2,3,4,4,4,4\},
$$
where the set $\{1,2,3,4\}$ is given the usual ordering.
A permutation of a multiset is an arrangement of its elements in a row, for example
$$
3\ 1\ 2\ 4\ 4\ 1\ 2\ 4\ 1\ 4.
$$
We may write this in two-line notation as
\eq\label{permexample}
\left(
\begin{array}{c c c c c c c c c c}
1&1&1&2&2&3&4&4&4&4\\
3& 1& 2& 4& 4& 1& 2& 4& 1& 4
\end{array}
\right).
\en
The {\em intercalation product} $\pi_1\perp\pi_2$ of two multiset permutations
$\pi_1$ and $\pi_2$ is defined in \cite{CF,Foata} and is obtained by
\begin{enumerate}
\item
Expressing $\pi_1$ and $\pi_2$ in the two-line notation.
\item
Juxtaposing these two-line notations.
\item 
Sorting the columns into nondecreasing order of the top line, in such a way that
the sorting is stable, in the sense that left-to-right order in the bottom line is preserved
when the corresponding top line elements are equal.
\end{enumerate}
For example,
\begin{eqnarray*}
&&
\left(
\begin{array}{c c c c c c}
1&1&2&3&4\\
3& 1& 4& 1& 2&
\end{array}
\right)
\top
\left(
\begin{array}{c c c c c}
1&2&4&4&4\\
2& 4& 4& 1& 4
\end{array}
\right)\\
&=&
\left(
\begin{array}{c c c c c c c c c c}
1&1&1&2&2&3&4&4&4&4\\
3& 1& 2& 4& 4& 1& 2& 4& 1& 4
\end{array}
\right)
\end{eqnarray*}
For possibly repeated elements $x_1, x_2, \ldots, x_m$, the {\em cycle}
$(x_1,\ldots, x_n)$ stands for the permutation obtained in two line form by sorting the
columns of
$$
\left(
\begin{array}{c c c c }
x_1&x_2&\ldots&x_m\\
x_2& x_3& \ldots& x_1
\end{array}
\right)
$$
in a stable manner. For example, we have
\begin{eqnarray}
&&(4\ 2\ 4\ 4\ 1\ 3\ 1\ 1\ 2\ 4)\label{cycle}\\
&=&
\left(
\begin{array}{c c c c c c c c c c}
4&2&4&4&1&3&1&1&2&4\\
2&4&4&1&3&1&1&2&4&4
\end{array}
\right)\nonumber\\
&=&
\left(
\begin{array}{c c c c c c c c c c}
1&1&1&2&2&3&4&4&4&4\\
3& 1& 2& 4& 4& 1& 2& 4& 1& 4
\end{array}
\right)\nonumber
\end{eqnarray}
and so (\ref{permexample}) is a cycle.
For these general cycles, $(x_1 x_2 \ldots x_n)$ is not always the same as
$(x_2\ldots x_n x_1)$ and it can happen that a cycle can be written as the intercalation product
of other cycles, as we shall see at the end of this section.

Knuth \cite{Knuth} proves the following factorisation of multiset permutations into cycles.
\begin{thm}[\cite{Knuth}]\label{Knuth}
Let the elements of the multiset $M$ be linearly ordered by the relation $<$.
Every permutation $\pi$ of $M$ has a unique representation as the intercalation
\eq\label{pirep}
\pi=(x_{1,1}\ldots x_{1,m_1} y_1)\top
(x_{2,1}\ldots x_{2,m_2} y_2)\top\cdots \top
(x_{\tau,1}\ldots x_{\tau,n_\tau} y_{\tau}),\quad \tau\geq 1,
\en
where the following two conditions are satisfied:
\begin{enumerate}
\item
$y_1\leq y_2\leq\ldots\leq y_{\tau}$
\item
$y_i<x_{i,j} {\rm \ for \ }1\leq j\leq m_i, \, 1\leq i\leq \tau$.
\end{enumerate}
\end{thm}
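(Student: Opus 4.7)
The plan is to establish existence and uniqueness in parallel, both by induction on $|M|$.

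For existence, I would arrange $\pi$ in two-line form with the top row in nondecreasing order, and let $y_1$ denote the minimum element of $M$; this will play the role of the final entry of the first cycle. Starting from the leftmost column (whose top entry is necessarily $y_1$), I would trace a cycle by repeatedly moving from the current column to the leftmost not-yet-used column whose top equals the bottom just read, continuing until the bottom returns to $y_1$. The sequence of bottoms read along the way gives $x_{1,1}, x_{1,2}, \ldots, x_{1,n_1}, y_1$, producing the first cycle factor $C_1 := (x_{1,1}\ldots x_{1,n_1}\, y_1)$. I would then verify that each $x_{1,j} > y_1$, delete the visited columns, and recurse on the remaining (smaller) multiset permutation. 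The required nondecreasing condition $y_1 \le y_2 \le \cdots$ follows because the minimum of the remaining multiset only grows or stays the same after deletion.

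For uniqueness, suppose $\pi = C_1 \perp \cdots \perp C_{t-1}$ is any valid factorisation. Condition (2) forces each $y_i$ to be strictly less than every $x_{i,j}$ in its cycle; combined with condition (1), which gives $y_1 \le y_i$ for all $i$, it follows that $y_1$ is strictly smaller than every $x_{i,j}$ across all cycles, so $y_1$ must be the minimum of $M$. Because the intercalation product performs a stable sort on tops, the leftmost column of $\pi$ must be contributed by a cycle whose two-line form leads with $y_1$, and the minimality of $y_1$ together with the stability constraint forces this cycle to be $C_1$. Tracing images from this leftmost column through the stable intercalation then recovers $C_1$ uniquely, after which uniqueness of $C_2, \ldots, C_{t-1}$ follows by induction.

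The main obstacle is the bookkeeping caused by repeated entries: multiple columns can share the same top value, so one must use the stability of the intercalation sort to identify which column belongs to $C_1$ and which to later cycles $C_i$ that might also have $y_i = y_1$. Showing that the leftmost-unused-column tracing rule is consistent with this stable sort—both in producing a valid cycle factor for existence, and in being forced by any valid factorisation for uniqueness—is the central technical point. Once this invariant is properly set up, the inductive step on $|M|$ is routine.
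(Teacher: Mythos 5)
The paper offers no proof of this statement: it is Knuth's factorisation theorem (Theorem C of Section 5.1.2 of \emph{The Art of Computer Programming}, Vol.~3), quoted with a citation only, so there is no in-paper argument to compare yours against. Judged on its own, your outline is essentially the standard argument (and essentially Knuth's), and it is sound; the invariant you single out as the central technical point is the right one and does go through. Concretely, for existence the leftmost-unused-column rule guarantees that, for each value $v$, the columns visited by the trace with top equal to $v$ are exactly a prefix of the columns of $\pi$ with top $v$, taken in the order visited; this is precisely the condition under which deleting the visited columns and intercalating the traced cycle back with the remainder reproduces $\pi$, and a counting argument (each $v\neq y_1$ heads as many columns as it appears in bottoms) shows the trace never gets stuck before reading $y_1$. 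The claim $x_{1,j}>y_1$ is automatic because you stop at the \emph{first} occurrence of $y_1$ as a bottom and $y_1=\min M$. For uniqueness, your deduction that conditions (1) and (2) force $y_1=\min M$, hence that $y_1$ occurs only as a cycle-closing letter, is the key step; stability then forces the leftmost column with top $y_1$ to be $C_1$'s column $(y_1,x_{1,1})$ and, inductively, forces $C_1$'s column $(x_{1,j},x_{1,j+1})$ to be the leftmost remaining column with top $x_{1,j}$, so any valid factorisation must reproduce your existence trace, and the length of $C_1$ is pinned down because no $x_{1,j}$ equals $y_1$. As a sanity check, your procedure applied to the permutation $3\,1\,2\,4\,4\,1\,2\,4\,1\,4$ of $\{1,1,1,2,2,3,4,4,4,4\}$ gives $(3\,1)\perp(1)\perp(2\,4\,2\,4\,4\,1)\perp(4)$, which intercalates back to the permutation and satisfies both conditions; note that the factorisation displayed in the paper after the theorem uses $15$ letters for a $10$-element multiset and so cannot be correct as printed, which is further evidence that your tracing rule is the right reading of Knuth's construction.
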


Note that (\ref{cycle}) is not in the form (\ref{pirep}). The factorisation of (\ref{permexample}) corresponding to Theorem~\ref{Knuth}
is
\begin{eqnarray*}
&&
(3\ 1)\top (1) \top 
(2\ 4\ 2\ 4\ 4\ 1)\top (4)\\
&=&
\left(
\begin{array}{c c}
3&1\\
1&3
\end{array}
\right)
\top
\left(
\begin{array}{c}
1\\
1
\end{array}
\right)
\top
\left(
\begin{array}{c c c c c c}
2&4&2&4&4&1\\
4&2&4&4&1&2
\end{array}
\right)
\top
\left(
\begin{array}{c}
4\\
4
\end{array}
\right)\\
&=&
\left(
\begin{array}{c c c c c c c c c c}
3&1&1&2&4&2&4&4&1&4\\
1&3&1&4&2&4&4&1&2&4
\end{array}
\right)\\
&=&
\left(
\begin{array}{c c c c c c c c c c}
1&1&1&2&2&3&4&4&4&4\\
3& 1& 2& 4& 4& 1& 2& 4& 1& 4
\end{array}
\right).
\end{eqnarray*}

\section{The CRP for multiset permutations}

Given a fixed sequence of positive integers $n_i\geq 1$, $i\geq 1$, we will construct a CRP process on permutations of multisets indexed by integers $t\geq 0$. We define  $M(n_1,\ldots,n_t)$
 to be the multiset which contains $n_i$ copies of $i$ for 
$i\in [t]$, $t\geq 1$, and $\emptyset$ for $t=0$. We give the set of elements $[t]$ of $M(n_1,\ldots,n_t)$ the usual ordering. 
The set of permutations of 
 $M(n_1,\ldots,n_t)$ is denoted by $\Ss$ and is of size
 $$
|\Ss|=
\binom{N_t}{n_1,\ldots,n_t},
$$
where $N_t=n_1+\cdots+n_t$. 
Note that 
$
S_n=S(\underbrace{{1,1,\ldots,1}}_{\text{$n$ times}}).
$
The random multiset permutation $\tilde\Pi_t$ of our process at the $t$th step is an element of
$\Ss$, represented in accordance with (\ref{pirep}).
At step $t\geq 1$, there are $n_{t}$ copies of $t$ either to be inserted in the existing permutation
$\tilde\Pi_{t-1}$ or to be put into singleton cycles. (It is not possible to put more than one copy of $t$ in a new cycle because of condition 2 of Theorem~\ref{Knuth}.)
Suppose that $k_t\in \{0\}\cup [n_t]$ of the $n_t$ copies of $t$ start $k_t$ new singleton cycles containing $t$, and the $n_t-k_t$ other copies of $t$ are inserted into the existing permutation. 
The $n_t-k_t$ copies of $t$ can be inserted to the left of any of the $y_i$ or $x_{ij}$ in (\ref{pirep}). A `stars and bars' argument shows that the number of ways of inserting $n_t-k_t$ copies of $t$ to the left of an element in the multiset $S(n_1,\ldots, n_{t-1})$
formatted like (\ref{pirep}) is
\eq\label{waysinsert}
\binom{N_{t-1}-1+n_t-k_t}{n_t-k_t},
\en
recalling that 
$$
\binom{-1}{k}=
\left\{
\begin{array}{l l}
1& {\rm if \ }k=0;\\
0& {\rm if \ }k>1.
\end{array}
\right.
$$

We now define the multiset CRP.
Given $\theta>0$, define
$$
F_t(\theta)=
\sum_{k_t=0}^{n_t}
\binom{N_{t-1}-1+n_t-k_t}{n_t-k_t}
\theta^{k_t}
$$
The special case 
\eq\label{special}
F_t(1)=
\sum_{k_t=0}^{n_t}
\binom{N_{t-1}-1+n_t-k_t}{n_t-k_t}
=\binom{N_{t}}{n_t}.
\en
results from the well known identity
\eq\label{GKPiden}
\sum_{\rho=0}^\nu\binom{\mu+\rho}{\rho}=\binom{\mu+\nu+1}{\nu}, \quad \nu,\mu\geq 0\ {\rm integers};
\en
see page 161 of \cite{GKP}.
Note that (\ref{special}) is the total number of ways of extending the permutation from time $t-1$
to time $t$.
At time $t\geq 1$, $k_t$ new singleton cycles containing $t$ are created and the other $n_t-k_t$ copies are inserted into the existing permutation in a specified way with probability $\theta^{k_t}/F_t(\theta)$. If $n_t=1$ for all $t$, then $F_t(\theta)=\theta+t-1$ and we recover the usual CRP. An induction argument on $t$ shows that all possible
$\pi\in\Ss$ can be constructed in this manner.  This can also be seen from the identity
$$
\binom{N_t}{n_1,\ldots, n_t}
= \prod_{s=1}^t \binom{N_s}{n_s}
$$

Next we will find the distribution of $\tilde\Pi_t$.
\begin{thm}\label{main}
Given $\theta>0$, for all $\pi\in\Ss$ the CRP generates $\pi$ at step $t$ with
probability equal to
$\theta^{|\pi|}/S$,
where
\eq\label{Sdef}
S=
\sum_{\pi\in\Ss}\theta^{|\pi|}=\,\prod_{s=1}^t F_s(\theta).
\en
\end{thm}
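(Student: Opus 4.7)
The plan is to show that each $\pi \in \Ss$ is produced by the CRP via a unique sequence of choices, so that $P(\tilde\Pi_t = \pi)$ is a single product of transition probabilities rather than a sum. Since the CRP is a sequence of independent choices, the trajectory probability equals $\prod_{s=1}^t \theta^{k_s}/F_s(\theta)$, where $k_s$ denotes the number of new singleton cycles created at step $s$ along that trajectory. The task then reduces to (i) identifying the trajectory from $\pi$ and (ii) reading off the exponent $\sum_s k_s$ as $|\pi|$.

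Next I would argue that CRP trajectories ending at $\pi$ are in bijection with $\pi$ itself. Apply Theorem~\ref{Knuth} to $\pi$ to obtain its unique factorization into cycles, each with a distinguished minimum element. Reading the CRP backwards: at step $s$ only copies of $s$ are inserted or placed in new singleton cycles $(s)$, so immediately after step $s$ the singletons $(s)$ are precisely the cycles of $\tilde\Pi_s$ whose minimum equals $s$, since no element larger than $s$ has yet been introduced. Subsequent steps can only enlarge those cycles by inserting elements greater than $s$, and the condition $y_i < x_{i,j}$ in the Knuth factorization pins the minimum down permanently. Consequently $k_s$ equals the number of cycles of $\pi$ (in its Knuth factorization) whose minimum element is $s$, and $\tilde\Pi_{s-1}$ is recovered from $\tilde\Pi_s$ by removing all copies of $s$ from the factorization: delete the $k_s$ singletons $(s)$ and strip the remaining $n_s-k_s$ copies of $s$ from the $x$-entries of the cycles with smaller minimum. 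The positions of those stripped copies inside the Knuth representation of $\tilde\Pi_s$ determine the unique insertion pattern used at step $s$.

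Combining these ingredients yields
\[
P(\tilde\Pi_t = \pi) \;=\; \prod_{s=1}^{t} \frac{\theta^{k_s}}{F_s(\theta)} \;=\; \frac{\theta^{\sum_s k_s}}{\prod_s F_s(\theta)} \;=\; \frac{\theta^{|\pi|}}{\prod_s F_s(\theta)},
\]
where the last equality uses that each cycle of the Knuth factorization of $\pi$ is counted exactly once, at the step matching its minimum, so $\sum_s k_s = |\pi|$. Summing over $\pi \in \Ss$ and using $\sum_\pi P(\tilde\Pi_t = \pi) = 1$ then forces $S = \prod_s F_s(\theta)$, which is the second equality in (\ref{Sdef}). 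I expect the main delicate point to be the bijection step: one must verify carefully that the ``remove all copies of $s$'' operation applied to the Knuth representation of $\tilde\Pi_s$ truly inverts the step-$s$ transition, which rests on the uniqueness statement in Theorem~\ref{Knuth} together with the observation that later insertions of larger elements cannot change which element is the minimum of a previously created cycle.
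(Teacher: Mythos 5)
Your proof is correct, and it reaches the result by a somewhat different route than the paper. The paper sets up a weighted arborescence on $V=\bigcup_{s=0}^t S(n_1,\ldots,n_s)$, computes the descendant-leaf weight sums $Q(\pi)=\theta^{|\pi|}\prod_{u>s}F_u(\theta)$ by backward induction, and then invokes Theorem~1 of the author's earlier paper, which says that a Markov chain with transition probabilities $Q(\pi_2)/Q(\pi_1)$ is absorbed at a leaf with probability proportional to its weight; the identity $S=\prod_s F_s(\theta)$ is read off as $Q(\emptyset)$. You instead compute the trajectory probability directly as a telescoping product $\prod_s \theta^{k_s}/F_s(\theta)$ and obtain $S=\prod_s F_s(\theta)$ a posteriori from normalization, which makes the argument self-contained. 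Both proofs stand or fall on the same combinatorial fact, which you correctly isolate as the delicate point: each $\pi$ is reached by exactly one trajectory, equivalently each vertex of the transition graph has a unique parent and distinct insertion patterns yield distinct children (this is what makes the paper's graph an arborescence, and the paper leaves it implicit behind the remark that an induction on $t$ shows every $\pi$ can be constructed). Your justification of that fact via the strict minimality $y_i<x_{i,j}$ of the last entry of each Knuth cycle, so that $k_s$ is the number of cycles of $\pi$ with minimum $s$ and the step-$s$ transition is inverted by deleting all copies of $s$, is exactly the right mechanism and matches the paper's parenthetical observation that condition~2 of Theorem~\ref{Knuth} forbids two copies of $s$ in one new cycle. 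The trade-off: the paper's version generalizes immediately to non-tree transition graphs via the cited theorem, while yours is more elementary and makes the probability of an individual $\pi$ transparent without external reference.
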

\begin{proof}
We will use the method in \cite{Stark}. An {\em arborescence} is a directed, rooted tree with edges oriented in agreement with paths from the root to the leaves. Our arborescence has vertex set $V=\bigcup_{s=0}^t \Pi(n_1,\ldots, n_s)$ and the root is $\emptyset$ (when $s=0$). The leaves of the arborescence are $\Pi(n_1,\ldots, n_t)$. Give each leaf $\pi\in\Pi(n_1,\ldots, n_t)$ the weight $\theta^{|\pi|}$, where $|\pi|$ is the number of cycles in $\pi$. There is a directed edge from
$\pi_1\in \Pi(n_1,\ldots, n_{s-1})$ to $\pi_2\in \Pi(n_1,\ldots, n_{s})$, $s\in [t]$, if and only if,
for some $k_{s}\in\{0\}\cup [n_{s}]$,
$\pi_2$ is obtained from $\pi_1$ by inserting $n_{s}-k_{s}$ copies of $s$ to the left of elements of $\pi_1$, when $\pi_1$ is written as (\ref{Knuth}), and creating $k_{s}$
singleton cycles each containing $s$.
For each $\pi\in V$, define $Q(\pi)$ to be the sum of the weights of all leaves 
$\pi^\prime\in\Pi(n_1,\ldots, n_t)$ for which there is a directed path from $\pi$ to $\pi^\prime$. For $\pi\in\Pi(n_1,\ldots, n_s)$, we have
\eq\label{Qdef}
Q(\pi)=\theta^{|\pi|}\prod_{u=s+1}^t F_u(\theta),
\en
which follows immediately from the interpretation of the binomial coefficient preceding 
(\ref{waysinsert}).
Taking $\pi=\emptyset$ and $s=0$ in (\ref{Qdef}) gives (\ref{Sdef}).
With $\pi_1$ and $\pi_2$ taken as above, we have $|\pi_2|=|\pi_1|+k_s$, 
and therefore
\eq\label{transprobs}
p_{\pi_1,\pi_2}:=\frac{Q(\pi_2)}{Q(\pi_1)}=\frac{\theta^{k_s}}{F_s(\theta)}.
\en
These are the transition probabilities of the CRP.
Theorem~1 of \cite{Stark} states that if we put transition probabilities (\ref{transprobs})
on the edges of the arborescence, and let the leaves be absorbing states, then we obtain a Markov chain whose probability of absorption on a given leaf is proportional to the weight of the leaf. By our choice of weight we are done.
\end{proof}

For  different steps the permutations  are consistent, in the sense that $\tilde\Pi_{s}$, for  $s<t$,  can be derived from $\tilde\Pi_{t}$ by removing elements $s+1,\ldots,t$ from their cycles and  deleting empty cycles if necessary.

\section{The distribution of the number of cycles}

Let $X_t$ denote the number of new cycles added in moving from time $t-1$ to time $t$,  
$t\geq 1$, which is the choice of $k_t$ above. The $X_t$ are independent with distributions given by
\eq\label{thetagenpmf}
\P(X_t=k_t)=
\frac{\theta^{k_t}}{F_t(\theta)}
\binom{N_{t-1}-1+n_t-k_t}{n_t-k_t},\quad k_t\in\{0\}\cup [n_t].
\en

If $\theta=1$, then, by (\ref{special}), we have
\eq\label{theta1pmf}
\P(X_t=k_t)=
\frac{1}{\binom{N_t}{n_t}}
\binom{N_{t-1}-1+n_t-k_t}{n_t-k_t}=
\frac{\binom{n_t}{k_t}}{\binom{N_t}{k_t}}\frac{N_t-n_t}{N_t-k_t}
\en
This equals the probability that starting from an urn with $n_t$ balls representing failures and $N_t-n_t$ balls representing successes, that the first $k_t$ balls pulled from the urn 
without replacement are failures, and the $(k_t+1)$th ball pulled from an urn is a success.
This description shows that 
$
Y_t=X_t+1
$
 has a negative hypergeometric distribution, which we now describe.
The negative hypergeometric distribution \cite{JKK,KK,RS} with parameters $N$, $M$ and $r$ is like the negative binomial distribution, but without replacement. There are $N$ balls in total, $M$ balls representing success and $N-M$ balls representing failures. The hypergeometric distribution gives the probability that the $r$th success happens on the
$\kappa$th draw, $\kappa=r,r+1,\ldots, N$. A negative hypergeometric random variable $Y$ with these parameters has probability mass function
\eq\label{neghyperpmf}
\BP(Y=\kappa)=
\frac{\binom{M}{r-1}\binom{N-M}{\kappa-r}}{\binom{N}{\kappa-1}}\cdot
\frac{M-r+1}{N-\kappa+1}
\en
It is easily checked that (\ref{theta1pmf}) is (\ref{neghyperpmf}) with
$N=N_t$, $M=N_t-n_t=N_{t-1}$, $r=1$ and $\kappa=k+1$.
The expectation of $Y$ is
$$
\BE(Y)=r\frac{N+1}{M+1},
$$
the variance of $Y$ is
$$
{\rm Var}(Y)=
\frac{r(N-M)(N+1)(M+1-r)}{(M+1)^2(M+2)}
$$
and  the third centred moment (see \cite{JKK,KK}) is
$$
\BE([Y-\BE(Y)]^3)={\rm Var}(Y)\cdot\frac{(2N-M+1)(M+1-2r)}{(M+1)(M+3)}.
$$
Therefore,
\eq\label{expXt}
\BE(X_t)=\BE(Y_t)-1=
\frac{N_t+1}{N_{t-1}+1}-1=
\frac{n_t}{N_{t-1}+1},
\en
\eq\label{varXt}
{\rm Var}(X_t)={\rm Var}(Y_t)=
\frac{n_t(N_t+1)N_{t-1}}{(N_{t-1}+1)^2(N_{t-1}+2)}
\en
and
\begin{eqnarray}\label{third}
\BE([X_t-\BE(X_t)]^3)&=&\BE([Y_t-\BE(Y_t)]^3)\nonumber\\
&=&
{\rm Var}(Y_t)\cdot
\frac{(n_t+N_t+1)(N_{t-1}-1)}{(N_{t-1}+1)(N_{t-1}+3)}.
\end{eqnarray}
It follows that the number of cycles $K_t=\sum_{s=1}^t X_t$ has expectation
$$
\BE(K_t)=
\sum_{s=1}^t
\frac{n_s}{N_{s-1}+1}
$$
and variance
$$
{\rm Var}(K_t)=
\sum_{s=1}^t
\frac{n_s(N_s+1)N_{s-1}}{(N_{s-1}+1)^2(N_{s-1}+2)}.
$$

Given sequences $a_t$ and $b_t$, we use the notation $a_t=O(b_t)$ to mean $a_t\leq Cb_t$ for a constant $C>0$; $a_t\asymp b_t$ to mean $C_1b_t\leq a_t\leq C_2b _t$ for constants $C_1>0$, $C_2>0$; and $a_t\sim b_t$ to mean $\lim_{t\to\infty}a_t/b_t = 1$.
We also define $a_t=o(b_t)$ to mean $\lim_{t\to\infty} a_t/b_t=0$.
We will put the bound the $n_t=O(1)$ to give a central limit theorem for $K_t$ for all $\theta$.

\begin{thm}\label{firstmain}
If $\theta=1$ and $n_i=n\geq 1$ for all $i\geq 1$, then
\eq\label{simest}
\BE(K_t)\sim \log t, \quad
{\rm Var}(K_t)\sim \log t.
\en
For general $\theta$, let $n_1, n_2, \ldots, $ be given, $n_i\geq 1$ for all $i$, and suppose 
\eq\label{nibound}
n_i=O(1).
\en
Then,
\eq\label{estimates}
\BE(K_t)\asymp \log t, \quad
{\rm Var}(K_t)\asymp \log t,
\en
and
\eq\label{CLT}
K_n \sim \BE(K_n)~~{\rm a.s.},~~~~~~
\frac{K_t-\BE(K_t)}
{\sqrt{{\rm Var}(K_t)}}
\stackrel{d}{\to} {\rm N}(0,1).
\en
\end{thm}
\begin{proof}
First we prove the theorem for $\theta=1$. 
From (\ref{expXt}), (\ref{varXt}) and (\ref{nibound}) we get $\BE(X_t)\asymp 1/t$ and
${\rm Var}(X_t)\asymp 1/t$, which result in (\ref{estimates}), after which
the Lindeberg-Feller Theorem easily shows the central limit theorem in (\ref{CLT}), because $X_t\leq n_t=O(1)$.
If we impose the condition $n_i=n$, for $i\geq 1$, then we immediately obtain (\ref{simest}).
For $\theta$ general, we reduce to the case $\theta=1$.
This follows from  noting that in (\ref{thetagenpmf}), $\theta^{k_t}\asymp 1$ and $F_t(\theta)\asymp F_t(1)$ uniformly over $t\geq 1$, by (\ref{nibound}), and so
(\ref{estimates})  still holds. 
For the almost sure convergence, modify the proof of (6.6)~Theorem on page 44 of \cite{durrett}.
\end{proof}

 By the following theorem a Central Limit Theorem can be obtained even when $n_t$ is unbounded, which is not the case in Theorem~\ref{firstmain}.
\begin{thm}\label{norm}
Suppose $\theta=1$, $n_t=O(N_{t-1})$.
Then, (\ref{CLT}) holds.
\end{thm}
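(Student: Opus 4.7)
The plan is to apply the Lindeberg--Feller central limit theorem to the independent cycle-increments $(X_s)_{s\ge 1}$, following the template of the previous theorem but with a new ingredient that replaces the lost uniform bound $X_s=O(1)$. First I would record the order estimates: formulas (\ref{expXt}) and (\ref{varXt}), combined with $N_s\asymp N_{s-1}$ (a consequence of $n_s=O(N_{s-1})$), give $\mu_s:=\BE(X_s)=O(1)$, ${\rm Var}(X_s)\asymp n_s/N_{s-1}$, and hence
\begin{equation*}
V_t:={\rm Var}(K_t)\asymp \BE(K_t)\asymp \sum_{s=1}^{t}\frac{n_s}{N_{s-1}}\to\infty,
\end{equation*}
the divergence being precisely the standing hypothesis.

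The crucial new input is a uniform geometric tail bound. When $\theta=1$, $X_s$ is the number of failures before the first success when drawing without replacement from an urn of $n_s$ failures and $N_{s-1}$ successes, so
\begin{equation*}
\BP(X_s\geq k)=\prod_{j=0}^{k-1}\frac{n_s-j}{N_s-j}\leq\left(\frac{n_s}{N_s}\right)^{k},\qquad k\geq 1,
\end{equation*}
and the hypothesis $n_s\leq CN_{s-1}$ forces $n_s/N_s=n_s/(N_{s-1}+n_s)\leq Q:=C/(1+C)<1$ uniformly in $s$. This substitutes for the pointwise bound $X_s\leq n_s=O(1)$ that drove the Lindeberg verification in the preceding proof.

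The main technical step, and the one I expect to be the principal obstacle, is to check the Lindeberg condition
\begin{equation*}
L_t(\epsilon):=\frac{1}{V_t}\sum_{s=1}^{t}\BE\bigl[(X_s-\mu_s)^2\mathbf{1}_{\{|X_s-\mu_s|>\epsilon\sqrt{V_t}\}}\bigr]\longrightarrow 0.
\end{equation*}
Since $\mu_s$ is bounded, for $t$ large the truncating event simplifies to $\{X_s>A_t\}$ with $A_t:=\epsilon\sqrt{V_t}-O(1)\to\infty$, and the geometric tail yields
\begin{equation*}
\BE\bigl[(X_s-\mu_s)^2\mathbf{1}_{\{X_s>A_t\}}\bigr]\leq \sum_{k>A_t}k^2(n_s/N_s)^{k}\leq \frac{n_s}{N_s}\sum_{k>A_t}k^2 Q^{k-1}.
\end{equation*}
Summing on $s\leq t$ and using $\sum_{s} n_s/N_s\asymp V_t$ (from $N_s\asymp N_{s-1}$), the factor $V_t$ cancels and one is left with $L_t(\epsilon)\lesssim \sum_{k>A_t}k^2 Q^{k-1}\to 0$. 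Lindeberg--Feller then delivers the Gaussian limit in (\ref{CLT}).

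The almost sure statement $K_t\sim\BE(K_t)$ is handled exactly as in the previous theorem. Because $\sum_{s}{\rm Var}(X_s)/V_s^2<\infty$ by the telescoping comparison with $\int_{1}^{\infty}x^{-2}\,dx$, Kolmogorov's convergence criterion and Kronecker's lemma force $(K_t-\BE(K_t))/V_t\to 0$ almost surely, and $V_t\asymp\BE(K_t)$ converts this into $K_t/\BE(K_t)\to 1$ a.s.
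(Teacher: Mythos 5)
Your proof is correct, but it takes a genuinely different route from the paper's. The paper verifies Lyapunov's condition with $\delta=1$: it uses the closed-form third central moment (\ref{third}) of the negative hypergeometric distribution together with $n_t=O(N_{t-1})$ to get $\BE([X_t-\BE(X_t)]^3)\asymp{\rm Var}(X_t)\asymp n_t/N_{t-1}$, so that $\sum_{s\le t}\BE([X_s-\BE(X_s)]^3)\asymp{\rm Var}(K_t)=o({\rm Var}(K_t)^{3/2})$, the last step being exactly where the divergence hypothesis enters. You instead verify the Lindeberg condition directly, and your key lemma is the uniform geometric tail bound $\BP(X_s\ge k)\le(n_s/N_s)^k\le(n_s/N_s)Q^{k-1}$ with $Q=C/(1+C)<1$, read off from the urn representation of $X_s$ at $\theta=1$. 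Your computation is sound: the factor $n_s/N_s$ sums to $\asymp V_t$, cancelling the normalisation, and the remaining geometric tail $\sum_{k>A_t}k^2Q^{k-1}$ vanishes since $A_t\asymp\sqrt{V_t}\to\infty$. What each approach buys: the paper's is shorter because the third-moment formula is already on the table, though it silently passes from the signed third central moment to the absolute one that Lyapunov's condition actually requires (harmless here since $X_s-\mu_s\ge-\mu_s$ with $\mu_s=O(1)$, but unstated); your tail-bound argument is more self-contained, does not need the third-moment identity, and would extend to any $(2+\delta)$-moment or truncation argument one cares to run. You also supply the Kolmogorov--Kronecker argument for the almost sure statement $K_t\sim\BE(K_t)$, which is part of (\ref{CLT}) but which the paper's proof of this theorem does not address (its proof of the preceding theorem only points to Durrett's Theorem~2.3.8 for that step). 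Two cosmetic points: the case $s=1$ (where $N_0=0$ and $X_1=n_1$ is deterministic with zero variance) should be set aside before writing $n_s/N_{s-1}$, and in the Lindeberg sum you should note $(k-\mu_s)^2\le k^2$ for $k\ge A_t$ large; neither affects the argument.
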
\label{Lap}
\begin{proof}
In order to prove limiting normality, we will verify Lyapounov's Condition in \cite{Bill} with 
$\delta=1$, by
showing that
\eq\label{Lyapanov}
\lim_{t\to\infty}\,
\frac{1}{{\rm Var}(K_t)^{3/2}}
\sum_{s=1}^t \BE([X_s-\BE(X_s)]^3)=0.
\en
The assumption $n_t=O(N_{t-1})$, (\ref{varXt}) and (\ref{third}) produces
$$
\BE([X_t-\BE(X_t)]^3\asymp{\rm Var}(X_t)\asymp n_t/N_{t-1},\quad t\geq 2.
$$
We have the lower bound
\begin{eqnarray*}
\sum_{s=2}^t \frac{n_s}{N_{s-1}}&\geq &
\sum_{s=2}^t \ln \left( 1+\frac{n_s}{N_{s-1}}\right)\\
&=& \sum_{s=2}^t \left(\ln(N_s)-\ln(N_{s-1}\right)\\
&=&\ln(N_t)-\ln(N_1)\to\infty,\\
\end{eqnarray*}
where the convergence to infinity follows from the assumption $n_t\geq 1$ for all $t\geq 1$.
Hence,
$$
{\rm Var}(K_t)\asymp \sum_{s=1}^t \BE([X_s-\BE(X_s)]^3) \asymp
\sum_{s=2}^t \frac{n_s}{N_{s-1}}\to\infty
$$
and (\ref{Lyapanov}) holds.

For the almost sure convergence, modify the proof of (6.6)~Theorem on page 44 of \cite{durrett}
as in the proof of Theorem~\ref{firstmain}.
\end{proof}

An example where Theorem~\ref{norm} holds is obtained by setting $\theta=1$, $n_t=\lceil Ct^\alpha\rceil$, for constants $C>0$, $\alpha>0$, where 
$\lceil x\rceil$ is the smallest integer greater or equal to $x$. Then,
$n_t= Ct^\alpha + O(1)$ and
$N_t= Ct^{\alpha+1}/(\alpha+1) + O(t^\alpha)$ and the assumptions of Theorem~\ref{norm}
hold. The expectation and variance are asymptotically
$$
\BE(K_t)\sim(\alpha+1) \log t, \quad
{\rm Var}(K_t)\sim (\alpha+1)\log t.
$$

Another example for $\theta=1$ is obtained through regular variation \cite{BGT}.  A positive, measurable function $\ell(x)$, defined on $[\eta,\infty)$ for $\eta$ real, 
 is {\em slowly varying} if
\eq\label{slow}
\lim_{x\to\infty}\frac{\ell(\lambda x)}{\ell(x)}=1,\quad \forall \lambda >0.
\en
The Uniform Convergence Theorem \cite{BGT} provides that the convergence in (\ref{slow})
is uniform on compact $\lambda$-sets in $(0,\infty)$.
A positive, measurable function defined on $[\eta,\infty)$,  is {\em regularly varying} with real index $\alpha$ if
$$
\lim_{x\to\infty}\frac{f(\lambda x)}{f(x)}=\lambda^\alpha,\quad \forall \lambda >0.
$$
A regularly varying function with index $\alpha$ can be written as
$$
f(x)=x^\alpha\ell(x),
$$
where $\ell(x)$ is a slowly varying function.
Suppose that $\ell(x)$ is locally bounded in $[\eta,\infty)$, i.e.~bounded on compact 
subsets of $[\eta,\infty)$, and $\alpha>-1$.
Karamata's theorem gives 
$$
\int_\eta^t x^\alpha\ell(x)\,dx\sim t^{\alpha+1}\ell(t)/(\alpha+1),\quad t\to\infty.
$$
We further suppose that $\eta\leq 1$, $\alpha\geq0$, with $\lim_{x\to\infty}\ell(x)=\infty$ in the case $\alpha=0$, and define
$$
n_t=\Big\lceil\int_t^{t+1} x^\alpha\ell(x)\,dx\Big\rceil,\quad t\geq 1.
$$
The Uniform Convergence Theorem implies that 
$$
\sup_{t\leq x\leq t+1}\left|\frac{\ell(x)}{\ell(t)}-1\right|\leq
\sup_{1\leq \lambda \leq 2}\left|\frac{\ell(\lambda t)}{\ell(t)}-1\right|
=o(1),
$$
from which
\begin{eqnarray*}
\int_t^{t+1} x^\alpha\ell(x)\,dx&=&(1+o(1))\ell(t)\int_t^{t+1} x^\alpha\,dx\\
&=&
(1+o(1))\ell(t)(t^\alpha+o(t^\alpha))\\
\end{eqnarray*}
and
$$
n_t\sim t^\alpha\ell(t).
$$
Moreover, Karamata's theorem gives us
$$
N_{t-1}\sim t^{\alpha+1}\ell(t)/(\alpha+1).
$$
Hence, the conditions of Theorem~\ref{norm} hold.

We will now show that $n_t$ may be chosen in such a way that the conclusion of the central limit theorem
does not hold for $K_t$.
Suppose that $\theta=1$ and the $n_t$ are chosen such that 
\eq\label{conda}
N_{t-1}=o(n_t).
\en
By (\ref{expXt}) and (\ref{varXt}),
we have
$$
\BE(X_t)\sim \frac{n_t}{N_{t-1}}, \quad
{\rm Var}(X_t)\sim \left(\frac{n_t}{N_{t-1}}\right)^2.
$$
We further choose the $n_t$ such that
\eq\label{condb}
\sum_{s=1}^{t-1} {\rm Var}(X_s) = o\left({\rm Var}(X_t)\right),
\en

A particular example of such a sequence is 
$$
n_t=\lceil e^{t^3}\rceil=e^{t^3}+O(1),
$$
from which
\begin{eqnarray*}
N_t&=&\sum_{s=1}^t e^{s^3}+O(t)\\
&=&e^{t^3}\left(1+\sum_{s=1}^{t-1} e^{s^3-t^3}\right)+O(t)\\
&=&e^{t^3}\left(1+O\left(te^{(t-1)^3-t^3}\right)\right)+O(t)\\
&=&e^{t^3}(1+o(1)).
\end{eqnarray*}
Clearly, (\ref{conda}) holds and we also have
\begin{eqnarray*}
\sum_{s=1}^{t-1} {\rm Var}(X_s) &=& O\left(\sum_{s=1}^{t-1}e^{2s^3-2(s-1)^3}\right)\\
&=&
O\left(\sum_{s=1}^{t-1}e^{6s^2-6s}\right)\\
&=&
O\left(te^{6(t-1)^2-6(t-1)}\right)\\
&=&
o\left(e^{6t^2-6t}\right)
\end{eqnarray*}
and so (\ref{condb}) holds, as well, because ${\rm Var}(X_t)\sim e^{6t^2-6t+2}$.

Assuming (\ref{conda}) and (\ref{condb}), we have
\eq\label{Kt}
\frac{K_t-\BE(K_t)}{\sqrt{{\rm Var}(K_t)}}=
\frac{X_t-\BE(X_t)}{\sqrt{{\rm Var}(X_t)}}(1+o(1))+\frac{\sum_{s=1}^{t-1} (X_s-\BE(X_s)) }
{\sqrt{{\rm Var}(K_t)}}.
\en
Suppose that 
$$
\frac{K_t-\BE(K_t)}{\sqrt{{\rm Var}(K_t)}} \todist {\rm N}(0,1).
$$
By our assumptions, the variance of the last term of (\ref{Kt})
 is $o(1)$ and so by Slutsky's theorem \cite{durrett}
 \eq\label{conv}
\lim_{t\to\infty}\BP\left(\frac{X_t-\BE(X_t)}{\sqrt{{\rm Var}(K_t)}}\leq 0\right)
=\lim_{t\to\infty}\BP\left(\frac{K_t-\BE(K_t)}{\sqrt{{\rm Var}(X_t)}}\leq 0\right)=1/2,
\en
We will show that (\ref{conv}) is impossible. The c.d.f. of $X_t$ is given by
\begin{eqnarray}
P(X_t\leq x_t)&=&\sum_{k_t=0}^{x_t} \BP(X_t=k_t)\nonumber\\
 &=&\frac{1}{\binom{N_t}{n_t}}\sum_{k_t=0}^{x_t}\binom{N_{t-1}-1+n_t-k_t}{n_t-k_t}\label{step1}\\
 &=&\frac{1}{\binom{N_t}{n_t}}\left( \sum_{j=0}^{n_t}\binom{N_{t-1}-1+j}{j}
 - \sum_{j=0}^{n_t-x_t-1}\binom{N_{t-1}-1+j}{j}\right)\nonumber\\
  &=&\frac{1}{\binom{N_t}{n_t}}\left( \binom{N_t}{n_t}
 - \binom{N_t-x_t-1}{n_t-x_t-1}\right)\label{step2}\\
 &=& 1-\frac{(N_t-x_t-1)!/(n_t-x_t-1)!}{N_t!/n_t!}\nonumber\\
 &\geq&
 1-\left(\frac{N_t-x_t-1}{N_t}\right)^{N_{t-1}},\label{step3}
\end{eqnarray}
where we have used (\ref{theta1pmf}) at (\ref{step1}), (\ref{GKPiden}) at (\ref{step2}),
and $(N_t-x_t-1-i)/(N_t-i)\leq (N_t-x_t-1)/N_t$ for all $0\leq i\leq N_{t-1}-1$ at (\ref{step3}).
We therefore have
\begin{eqnarray*}
\liminf_{t\to\infty}P(X_t\leq \BE(X_t)) &\geq & 1-\lim_{t\to\infty}\left(\frac{N_t-\BE(X_t)-1}{N_t}\right)^{N_{t-1}}\\
&=&
 1-\lim_{t\to\infty}\left(\frac{N_t-n_t(1+o(1)/N_{t-1}-1}{N_t}\right)^{N_{t-1}}\\
 &=&
 1-\lim_{t\to\infty}\left(1-(1+o(1))/N_{t-1}\right)^{N_{t-1}}\\
 &=&1-e^{-1}>1/2,
\end{eqnarray*}
contradicting (\ref{conv}).


\section{Discussion}

Potentially, improvements might be made on these results.
The CRP we have defined places all $n_t$ elements labelled $t$ at the same time.
Placing them sequentially seems like a difficult problem. It would be interesting to get better estimates for $\BE(K_t)$ and
${\rm Var}(K_t)$, especially when $\theta\neq 1$. Perhaps the hypotheses of 
Theorem~\ref{Lap} could be weakened.

In the case of random permutations, there is a Poisson approximation of $K_n$.
The total variation distance between the law 
${\cal L}(K_n)$ of $K_n$ and
the ${\rm Poisson}(\BE(K_n))$ distribution is of order
\begin{eqnarray*}
d_{\rm TV}({\cal L}(K_n),{\rm Poisson}(\BE(K_n)))&:=&
\frac{1}{2}\sum_{k=1}^n \left|
\BP(K_n=k)-\exp\left(-\BE(K_n)\right)\frac{\BE(K_n)^k}{k!}\right|\\
&\asymp& \frac{1}{\log n};
\end{eqnarray*}
see \cite{BH}.
By considering the process of cycle counts $(C_1(n), C_2(n),\ldots, C_n(n))$, where
$C_i(n)$ is the number of cycles of size $i$, we may write $K_n=\sum_{i=1}^n C_i(n)$.
Moreover, \cite{DP} shows that  $B_n(\cdot)\todist B(\cdot)$, where
$$
B_n(t):=
\frac{\sum_{i=1}^{\lfloor n^t \rfloor} C_i(n) - t\log n}{\sqrt{\log n}}, \quad 0\leq t\leq 1,
$$
and $B(t)$ is standard 
Brownian motion.
 If suitable approximations could be made to the process of cycle counts for multiset permutations, then similar progress might be made for the results obtained in this paper when $\theta=1$.

\begin{acknowledgement}[title={Acknowledgments}]
The author thanks the anonymous referee and the Associated Editor for a careful reading of this paper and for helpful suggestions.
\end{acknowledgement}

\bibliographystyle{vmsta2-mathphys}
\bibliography{multiset}

\end{document}